\newtheorem*{theorem}{Theorem}
\newtheorem{lemma}{Lemma}
\newtheorem{remark}{Remark}
\newtheorem{corollary}{Corollary}
\begin{document}
\title{Proper trajectories of type $\mathbb{C}^{\ast}$ of a polynomial vector field on $\mathbb{C}^2$}

%\selectlanguage{english}

\author{Alvaro Bustinduy}

\address{Departamento de Ingenier{\'\i}a Industrial \newline
         \indent Escuela Polit{\'e}cnica Superior \newline
         \indent Universidad Antonio de Nebrija \newline
         \indent C/ Pirineos 55, 28040 Madrid. Spain}
         \email{abustind@nebrija.es}

\thanks{2000 {\it Mathematics Subject Classification.} Primary 32M25;
Secondary 32L30, 32S65}
\thanks {{\it Key words and phrases.} Complete vector field,
complex orbit, holomorphic foliation}
\thanks{Supported by MEC
projects MTM2004-07203-C02-02 and  MTM2006-04785.}

\begin{abstract}  We prove that if a polynomial
vector field on $\mathbb{C}^2$ has a proper and non-algebraic
trajectory analytically isomorphic to $\mathbb{C}^{\ast}$ all its
trajectories are proper, and except at most one which is
contained in an algebraic curve of type $\mathbb{C}$ all of them are of
type $\mathbb{C}^{\ast}$. As corollary we obtain an analytic
version of Lin-Zaidenberg Theorem for polynomial foliations.
\end{abstract}

\maketitle
%\markright{PROPER TRAJECTORIES OF VECTOR FIELDS}

\section{Introduction}

We shall consider from now on polynomial vector fields on
$\mathbb{C}^2$ with isolated zeroes. Such vector fields $X$
define a foliation by curves $\mathcal{F}_X$ in $\mathbb{C}^2$
with a finite number of singularities (zeros of $X$) that extends
to $\mathbb{CP}^2=\mathbb{C}^2 \cup L_{\infty}$ (see
\cite{Gomez-mont-libro}). Each trajectory $C_z$ of $X$ through a
$z\in\mathbb{C}^2$ with $X(z)\neq{0}$ is contained in a leaf $\mathcal{L}$  of this extended foliation, and
its limit set $lim\,(C_z)$ is defined as $\cap_{\,m\geq 1}\,\,
\overline{\mathcal{L}\setminus K_m}$, where $K_m \subset  K_{m+1}
\subset \mathcal{L}$ is a sequence of compact subsets with
$\cup_{\,m\geq 1}\,\, K_m = \mathcal{L}$. We say that a trajectory
$C_z$ is {\em proper} if its topological closure $\overline{C}_z$
defines an analytic curve in $\mathbb{C}^{2}$ of pure dimension
one, i.e. if the inclusion of $\overline{C}_z$ in $\mathbb{C}^2$
is a proper map. For a proper trajectory $C_z$ its $lim\,(C_z)$ is
either a finite set of points, and $C_z$ is said to be algebraic,
or it contains $L_{\infty}$, and $C_z$ is said to be
non-algebraic. \em In what follows, transcendental will mean
proper and non-algebraic. \em

The important work of Marco Brunella  on the trajectories of a
polynomial vector field with a transcendental planar isolated end
\cite{Brunella-topology} has a remarkable corollary:
If $X$ is a
polynomial vector field on $\mathbb{C}^2$ with a transcendental
trajectory $C_z$ of type $\mathbb{C}$ (``of type" means
analytically isomorphic to) the foliation $\mathcal{F}_X$
in $\mathbb{C}^2$ is equal to the foliation defined by a
constant vector field after an holomorphic automorphism \cite[Corollairie]{Brunella-topology}. In particular any proper
immersion $\gamma$ of $\mathbb{C}$ in $\mathbb{C}^2$ whose image
is contained in a leaf of a polynomial foliation is equal to
$\gamma(t)=(t,0)$ modulo a holomorphic automorphism. That result
can be considered as an Abhyankar-Moh and Suzuki Theorem
(\cite{Abhyankar-moh-crelle} and \cite{Suzuki-japonesa}) for
polynomial foliations \cite[p.\,1230]{Brunella-topology}. In this
note we will study the case of a polynomial vector field  with a
transcendental trajectory of type $\mathbb{C}^{\ast}$. We will
start with \cite[Th\'eor�eme]{Brunella-topology} and apply some
previous results of \cite{Bustinduy-indiana} and
\cite{Bustinduy-ijm} to determine these vector fields. The main
result is the following:
\begin{theorem}
If a polynomial vector field $X$ on $\mathbb{C}^2$ has a
transcendental trajectory of type $\mathbb{C}^{\ast}$, all its
trajectories are proper, and except at most one which is
contained in an algebraic curve of type $\mathbb{C}$ all of them are of
type $\mathbb{C}^{\ast}$.
\end{theorem}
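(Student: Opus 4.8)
The plan is to reduce the statement to Brunella's main theorem and then sharpen its output with the analytic classification of proper $\mathbb{C}^{\ast}$-trajectories from the author's earlier work. Write $C_{0}$ for the given transcendental trajectory of type $\mathbb{C}^{\ast}$. First I would verify that the hypotheses of \cite[Th\'eor\`eme]{Brunella-topology} are met: since $\mathbb{C}^{\ast}$ has exactly two topological ends, each a punctured disk, both ends of $C_{0}$ are planar, and because $C_{0}$ is non-algebraic its limit set $lim\,(C_{0})$ contains $L_{\infty}$, so at least one of these planar ends is an isolated end accumulating on $L_{\infty}$. Thus $X$ has a transcendental planar isolated end and Brunella's théorème applies. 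I expect its conclusion to give at once the first assertion — that every trajectory of $X$ is proper — together with a global normal form for $\mathcal{F}_{X}$: in analogy with the type-$\mathbb{C}$ corollary, where after a holomorphic automorphism $\mathcal{F}_{X}$ becomes the horizontal foliation $dy=0$, I expect $\mathcal{F}_{X}$ to become tangent to the fibres of a fibration of $\mathbb{C}^{2}$ whose generic fibre carries the same end behaviour as $C_{0}$.

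With properness in hand, the next step is to pin down the analytic type of the remaining leaves using \cite{Bustinduy-indiana} and \cite{Bustinduy-ijm}. The idea is that once all leaves are proper and one of them is analytically $\mathbb{C}^{\ast}$, a leaf sharing the planar isolated end of $C_{0}$ cannot be compact (its closure meets $L_{\infty}$) and cannot be of type $\mathbb{C}$ unless it is a distinguished separatrix: a transcendental leaf of the same foliation inherits two planar ends, whence it must itself be $\mathbb{C}^{\ast}$. I would therefore argue that the set of leaves \emph{not} of type $\mathbb{C}^{\ast}$ is confined to the finitely many degenerate fibres of the fibration, each piece of which is either again $\mathbb{C}^{\ast}$ or, exceptionally, of type $\mathbb{C}$.

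The last step, and the main obstacle, is to show that there is \emph{at most one} exceptional leaf and that it lies in an algebraic curve isomorphic to $\mathbb{C}$. For this I would compute Euler characteristics over the fibration: the generic fibre $\mathbb{C}^{\ast}$ has $\chi=0$ while $\chi(\mathbb{C}^{2})=1$, so additivity of $\chi$ over the finitely many degenerate fibres forces their total contribution to equal $1$; since a component of type $\mathbb{C}$ contributes $\chi=1$ and every other degenerate component (a $\mathbb{C}^{\ast}$, or an affine curve with further punctures) contributes $\chi\le 0$, exactly one such $\mathbb{C}$-leaf is possible. The delicate points I anticipate here are two. First, the leaves are fibres with the isolated zeros of $X$ removed, and deleting a point turns a $\mathbb{C}$ into a $\mathbb{C}^{\ast}$; so I must check that the exceptional type-$\mathbb{C}$ leaf is precisely a zero-free degenerate fibre, and that removing zeros on the other fibres does not create extra $\mathbb{C}$-leaves. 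Second, I must verify the algebraicity of the curve carrying this leaf — it should be the unique invariant algebraic curve of $\mathcal{F}_{X}$ in $\mathbb{C}^{2}$ — which I would extract from the fact that it is the only leaf whose closure fails to pick up the transcendental contact with $L_{\infty}$ common to the $\mathbb{C}^{\ast}$-leaves.
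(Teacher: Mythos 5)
Your reduction collapses at the second step: you have misread what Brunella's th\'eor\`eme concludes. Its output is not that $\mathcal{F}_X$ becomes \emph{tangent} to the fibres of a fibration, nor that all trajectories are proper; it is that there exists a polynomial $P$ of type $\mathbb{C}$ or $\mathbb{C}^{\ast}$ such that $\mathcal{F}_X$ is $P$-\emph{complete}, which is a \emph{transversality} statement: away from a finite set $\mathcal{Q}$ of values, the fibres of $P$ are transverse to $\mathcal{F}_X$ and the foliation locally trivializes the fibration $P$, so the leaves behave like multisections spread over the base, not like fibres. (The type-$\mathbb{C}$ corollary you invoke as an analogy is itself a nontrivial consequence of that transverse structure, not the shape of the th\'eor\`eme's conclusion.) Consequently neither of your two key claims is available where you use it: properness of all trajectories is not ``given at once,'' and there is no fibration by leaves over which to run an Euler-characteristic count. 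That count is in fact circular: a fibration whose fibres contain the leaves amounts to a (meromorphic) first integral for $X$, which is precisely what the paper derives \emph{from} the Theorem (Corollary 1), not an input to it.

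What the correct proof must do --- and what your proposal skips entirely --- is convert the transverse $P$-complete structure plus the properness of the single leaf $C_0$ into control of every other leaf. The paper does this in two cases. If $P$ has type $\mathbb{C}$, take $P=x$; $x$-completeness forces $X$ into the form $a(x)\frac{\partial}{\partial x}+(b(x)y+c(x))\frac{\partial}{\partial y}$, Picard's theorem applied to the projection of the universal cover of $C_0$ forces $a(x)=\lambda x^{N}$, and explicit integration of the resulting linear equation shows every leaf off $\{x=0\}$ is a translate $(x,y)\mapsto(x+\alpha,y)$ of $L$, hence proper and of type $\mathbb{C}^{\ast}$, the unique exception being the line $\{x=0\}$. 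If $P$ has type $\mathbb{C}^{\ast}$, one pulls back by the finite covering $H$ of (\ref{relaciones}), uses Picard again to write $H^{\ast}X=u^{k}\bigl(a(v)u\frac{\partial}{\partial u}+cv^{N}\frac{\partial}{\partial v}\bigr)$, and then uses the properness of $C_0$ three separate times --- to exclude $N>1$ via the Picard property of solutions near an essential singularity, and to exclude $\lambda_1\in\mathbb{R}\setminus\mathbb{Q}$ and $\lambda_1\in\mathbb{C}\setminus\mathbb{R}$ --- before arriving at an explicit meromorphic first integral of type $\mathbb{C}^{\ast}$. Only then do properness of all trajectories and the ``at most one exceptional trajectory, inside the algebraic curve $\{x=0\}$'' follow. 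Your intermediate assertion that any other transcendental leaf ``inherits two planar ends, whence it must itself be $\mathbb{C}^{\ast}$'' is exactly the statement that needs proof, and nothing in the proposal supplies it.
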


\section{Corollaries}

\begin{corollary}
Any polynomial vector field $X$ on $\mathbb{C}^2$ with  a
transcendental trajectory of type $\mathbb{C}^{\ast}$ has a
meromorphic first integral of type $\mathbb{C}^{\ast}$ which
modulo a holomorphic automorphism is of the form
\begin{equation}\label{seq}
x^m(x^{\ell}y+p(x))^n
\end{equation}
where $m\in\mathbb{Z}^{\ast}$, $n\in\mathbb{N}^\ast$ with
$(m,n)=1$, $\ell\in\mathbb{N}$, $p\in\mathbb{C}[x]$ of degree $<
\ell$ with $p(0)\neq{0}$ if $\ell>0$ or $p(x)\equiv{0}$ if
$\ell=0$.
\end{corollary}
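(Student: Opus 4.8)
The plan is to upgrade the qualitative conclusion of the Theorem---all leaves proper, the generic one of type $\mathbb{C}^{\ast}$ and at most one of type $\mathbb{C}$---into an explicit meromorphic first integral and then to normalise it. First I would produce the integral. Since every leaf of $\mathcal{F}_X$ is proper, i.e.\ a closed curve of $\mathbb{C}^2$, the leaf space is Hausdorff and carries the structure of a Riemann surface off the singular set of $\mathcal{F}_X$; because the generic leaf is $\cong\mathbb{C}^{\ast}$ (finite topological type, two ends) the tautological projection extends to a nonconstant meromorphic map $H\colon\mathbb{C}^2\dashrightarrow\mathbb{CP}^1$ whose generic fibre is a single $\mathbb{C}^{\ast}$. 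To make this rigorous I would invoke \cite{Brunella-topology}, together with \cite{Bustinduy-indiana} and \cite{Bustinduy-ijm}, which conjugate $\mathcal{F}_X$ by a holomorphic automorphism $\Phi$ of $\mathbb{C}^2$ to an algebraic model foliated by algebraic $\mathbb{C}^{\ast}$-curves; this model carries a \emph{rational} first integral $F$ with $H=F\circ\Phi$, so that normalising $F$ normalises $H$ modulo the holomorphic automorphism $\Phi$, exactly as the statement requires.

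Next I would normalise $F$. After $\Phi$ the single leaf of type $\mathbb{C}$ is an algebraic curve, which a polynomial automorphism sends to the line $\{x=0\}$; thus $\{x=0\}$ is invariant. I would then show that the degenerate locus of $F$ consists of $\{x=0\}$ together with a single further invariant curve $\{g=0\}$, $g$ a reduced polynomial, so that $F=x^{m}g^{n}$ for integers $m,n$; equivalently $\mathcal{F}_X$ is the logarithmic foliation defined by the closed rational form
\[
\omega=m\,\frac{dx}{x}+n\,\frac{dg}{g}.
\]
Demanding that $\omega$ define a polynomial foliation with isolated singularities, that $\{x=0\}$ be of type $\mathbb{C}$ and the generic fibre of type $\mathbb{C}^{\ast}$, and normalising $g$ by a shear in $y$, forces $g=x^{\ell}y+p(x)$ with $\ell\in\mathbb{N}$, $\deg p<\ell$ and $p(0)\neq0$ when $\ell>0$, and $g=y$ (that is $p\equiv0$) when $\ell=0$. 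Taking $n\in\mathbb{N}^{\ast}$ and $(m,n)=1$ then records that $F$ is the \emph{primitive} integral---a generic fibre being one $\mathbb{C}^{\ast}$ and not a multiple cover---while $m\in\mathbb{Z}^{\ast}$ allows $\{x=0\}$ to be either a zero or a pole of $F$.

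The heart of the matter, and the step I expect to resist, is establishing that the degenerate locus is this small and that $F$ is exactly of the above monomial shape: concretely, that a rational function on $\mathbb{C}^2$ with generic fibre $\cong\mathbb{C}^{\ast}$ and a single fibre of type $\mathbb{C}$ is, up to automorphism, $x^{m}(x^{\ell}y+p(x))^{n}$. This is a classification of $\mathbb{C}^{\ast}$-fibrations of the plane, and I would approach it by compactifying to $\mathbb{CP}^2$, resolving the foliation along $L_{\infty}$, and reading the exponents $m,n$ and the tail $p(x)$ off the dual graph of the fibre at infinity and the local behaviour of the two ends of the generic leaf---the $\mathbb{C}^{\ast}$-embedding analysis underlying the Lin--Zaidenberg theorem alluded to in the corollaries. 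The delicate points are to exclude additional special fibres and to control the configuration over $L_{\infty}$; once the shape $x^{m}g^{n}$ is secured, the remaining conditions---coprimality, the degree bound on $p$ and the case $\ell=0$---are routine bookkeeping.
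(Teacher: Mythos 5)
Your two-step skeleton (first produce a meromorphic integral, then normalize it) is the same as the paper's, but both of your steps rest on claims that are either circular or are precisely the hard theorems the paper cites, so as written the proposal has genuine gaps. The paper's proof is two citations: (i) Suzuki's Th\'eor\`eme II \cite{Suzuki-springer-largo}, which says that a vector field on $\mathbb{C}^2$ all of whose trajectories are proper and parabolic admits a meromorphic first integral --- applicable here because the Theorem just proved guarantees exactly that; and (ii) the Saito--Suzuki classification (\cite{Suzuki-anales}, \cite{Saito-osaka}) of meromorphic functions of type $\mathbb{C}^{\ast}$ on $\mathbb{C}^2$: up to a \emph{holomorphic} automorphism they are exactly $x^m(x^{\ell}y+p(x))^n$. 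You identify neither result.

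Concretely: your existence step invokes \cite{Brunella-topology}, \cite{Bustinduy-indiana}, \cite{Bustinduy-ijm} as conjugating $\mathcal{F}_X$ by a holomorphic automorphism to ``an algebraic model foliated by algebraic $\mathbb{C}^{\ast}$-curves'' carrying a rational first integral. Those references prove no such thing --- they give $P$-completeness and auxiliary facts; the statement you want is precisely the paper's Remark~1, which is \emph{deduced from} Corollary~1, so using it here is circular. Your informal leaf-space argument (proper leaves $\Rightarrow$ Hausdorff leaf space $\Rightarrow$ meromorphic map) is exactly what Suzuki's theorem makes rigorous, and it is not automatic: properness of every leaf does not by itself yield a Hausdorff leaf space, let alone a meromorphic function on $\mathbb{C}^2$. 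Second, your normalization step is the Saito--Suzuki theorem itself, which you propose to re-derive by compactifying and reading dual graphs but leave as a sketch you yourself expect to ``resist''; that is where the entire content lies. Worse, that algebro-geometric route classifies \emph{rational} $\mathbb{C}^{\ast}$-fibrations up to \emph{polynomial} automorphism, whereas what is needed is the classification of merely \emph{meromorphic} integrals of type $\mathbb{C}^{\ast}$ up to \emph{holomorphic} automorphism: the integral supplied by Suzuki's theorem is not a priori rational (its generic fibres are transcendental curves), and you cannot reduce to the rational situation without the circular first step.
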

\begin{proof}
According to Masakazu Suzuki  \cite[Th\'eor�eme
II]{Suzuki-springer-largo} for a vector field on $\mathbb{C}^2$
with proper parabolic trajectories  there is always a meromorphic
first integral. In particular for $X$ this integral must be of
type $\mathbb{C}^{\ast}$ and it can be explicitly written applying
Saito-Suzuki Theorem \cite[p.\,527]{Suzuki-anales},
\cite{Saito-osaka}.
\end{proof}

\begin{remark} \em
It follows from Corollary~1 that if $X$ is a polynomial vector
field on $\mathbb{C}^2$ with a transcendental trajectory of type
$\mathbb{C}^{\ast}$  after a holomorphic change of coordinates $\varphi$, the corresponding vector field $\varphi_{\ast} X$ (maybe not polynomial) has a rational first
integral of the form (\ref{seq}). Removing the poles and zeros of codimension one of the differential of (\ref{seq}) one obtains that $\varphi_{\ast} X$ must be of the form
\begin{equation}
\label{poly}
\varphi_{\ast} X =   f \cdot Y = f \cdot \left \{
nx^{l+1}\frac{\partial}{\partial x} - ((m +
nl)x^ly+mp(x)+nx\dot{p}(x))\frac{\partial}{\partial y}
\right\},
\end{equation}
where $f$ is a holomorphic function that never vanishes; and
$m$, $n$, $\ell$ and $p(x)$ are as in (\ref{seq}).
In particular, any foliation $\mathcal{F}_X$ generated by a polynomial vector
field $X$ on $\mathbb{C}^2$ with a transcendental trajectory of type
$\mathbb{C}^{\ast}$  corresponds to the algebriac foliation
generated by the polynomial vector field $Y$ of (\ref{poly}) after a holomorphic automorphism.
\em
\end{remark}

\noindent {\bf Analytic version of Lin-Zaidenberg Theorem for
polynomial vector fields}

\noindent Lin-Zaidenberg Theorem \cite{Lin-zaidenberg} asserts
that any irreducible algebraic curve of type $\mathbb{C}$ in
$\mathbb{C}^2$ is of the form $y^r - a x^{s}=0$, with $(r,s)=1$
and $a\in\mathbb{C}^{\ast}$, after a polynomial change of
coordinates. From our Theorem we obtain the \em analytic version
of this theorem for polynomial foliations: \em

\begin{corollary} Let $\mathcal{C}$ be an
irreducible transcendental curve in $\mathbb{C}^2$ of type
$\mathbb{C}$. If there is a point $p\in \mathcal{C}$ such that
$\mathcal{C}\setminus\{p\}$ defines a trajectory of a polynomial
vector field then $\mathcal{C}=\{y^r - a
x^{s}=0\},\,\,\,r,s\in\mathbb{N}^{+},\,(r,s)=1,\,a\in\mathbb{C}^{\ast},$
up to a holomorphic automorphism.
\end{corollary}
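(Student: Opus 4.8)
The plan is to apply the main Theorem together with Corollary 1 to the curve $\mathcal{C}$, and then extract from the normal form the conclusion of Lin–Zaidenberg. By hypothesis $\mathcal{C}\setminus\{p\}$ is a trajectory $C_z$ of a polynomial vector field $X$, and since $\mathcal{C}$ is transcendental of type $\mathbb{C}$, the trajectory $C_z=\mathcal{C}\setminus\{p\}$ is of type $\mathbb{C}^{\ast}$ (removing one point from $\mathbb{C}$ gives $\mathbb{C}^{\ast}$). So $X$ possesses a transcendental trajectory of type $\mathbb{C}^{\ast}$, and the hypotheses of the main Theorem and of Corollary 1 are met. The Theorem tells us that all trajectories of $X$ are proper and, with at most one exception contained in an algebraic curve of type $\mathbb{C}$, all are of type $\mathbb{C}^{\ast}$. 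The key observation is that $\mathcal{C}$ itself, being of type $\mathbb{C}$, must be \emph{that} exceptional trajectory, so $\mathcal{C}$ is contained in — and by irreducibility equal to — an algebraic curve of type $\mathbb{C}$.

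Once I know $\mathcal{C}$ is an algebraic curve of type $\mathbb{C}$, the classical Lin–Zaidenberg Theorem as recalled in the excerpt applies directly and gives $\mathcal{C}=\{y^r-ax^s=0\}$ up to a \emph{polynomial} change of coordinates. The subtlety is that the present corollary asserts this only up to a \emph{holomorphic} automorphism, which is weaker, so this inclusion is automatic. Thus the logical skeleton is short: reduce to the algebraic case via the Theorem, then invoke Lin–Zaidenberg. I would, however, route the argument instead through Corollary 1 to make it self-contained within the foliation framework: Corollary 1 supplies a meromorphic first integral which, after a holomorphic automorphism $\varphi$, takes the form $x^m(x^{\ell}y+p(x))^n$ with the stated arithmetic constraints on $m,n,\ell,p$.

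The main step is then to identify which leaf of this first integral is the curve of type $\mathbb{C}$ and to show it has the quasihomogeneous shape $y^r-ax^s$. The level sets of $x^m(x^{\ell}y+p(x))^n$ are generically of type $\mathbb{C}^{\ast}$, matching the Theorem, and the one leaf of type $\mathbb{C}$ must correspond to a degenerate level — the locus where one of the two factors $x$ or $x^{\ell}y+p(x)$ vanishes, cutting $\mathbb{C}^{\ast}$ down to $\mathbb{C}$. I would examine the separatrices $\{x=0\}$ and $\{x^{\ell}y+p(x)=0\}$ of the normal form. In the case $\ell=0$, where $p\equiv 0$, the first integral is $x^m y^n$ and its special leaf is a coordinate axis, already of the form $y^r-ax^s$ (indeed an axis). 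In the case $\ell>0$ with $p(0)\neq 0$, the curve $\{x^{\ell}y+p(x)=0\}$ is a graph $y=-p(x)/x^{\ell}$, rational and of type $\mathbb{C}$; pulling back through $\varphi^{-1}$ and using that $\mathcal{C}$ is the transcendental curve of type $\mathbb{C}$, I would match $\mathcal{C}$ with this leaf.

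The hard part will be reconciling the \emph{analytic} normal form coming from $\varphi$ with the \emph{algebraic} structure demanded by the conclusion: $\varphi$ is only a holomorphic automorphism, so a priori the image $\varphi(\mathcal{C})$ is an analytic curve inside the level set of a rational function, not obviously an algebraic curve in the right coordinates. The cleanest resolution is to avoid this tension entirely by first invoking the Theorem to conclude that $\mathcal{C}$ is algebraic of type $\mathbb{C}$, and only then applying the classical (polynomial) Lin–Zaidenberg statement; the holomorphic normal form of Corollary 1 then serves merely as a consistency check that such a leaf exists and is quasihomogeneous. I would therefore present the proof in two lines — $\mathcal{C}$ is algebraic of type $\mathbb{C}$ by the Theorem, hence of the asserted form by Lin–Zaidenberg — and relegate the normal-form verification to a remark.
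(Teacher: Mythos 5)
Your main route breaks down at the ``key observation.'' The trajectory in question is $\mathcal{C}\setminus\{p\}$, which is of type $\mathbb{C}^{\ast}$, not of type $\mathbb{C}$; $\mathcal{C}$ itself is not a trajectory, so the Theorem's dichotomy does not force it to be the exceptional leaf. Worse, the conclusion you draw --- that $\mathcal{C}$ is algebraic --- flatly contradicts the hypothesis: in this paper \emph{transcendental} means proper and \emph{non-algebraic}, so $\mathcal{C}$ cannot coincide with an algebraic curve (had your reduction been valid, the corollary would be vacuous, since no admissible $\mathcal{C}$ could exist). For the same reason the classical Lin--Zaidenberg theorem can never be invoked here, and the ``up to holomorphic automorphism'' in the statement is not a harmless weakening of ``up to polynomial automorphism'': it is the whole point, because the automorphism produced is in general not polynomial. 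The two-line proof you propose therefore collapses.

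Your fallback route through Corollary~1 is in fact the paper's actual argument, but your case analysis fails exactly where the content lies. After the holomorphic automorphism, $\mathcal{C}\setminus\{p\}$ lies in a level set of $x^m(x^{\ell}y+p(x))^n$, and there are two cases. You only examine the zero level (the separatrices), but the case producing the genuinely quasihomogeneous curves $y^r-ax^s=0$ with $r,s>1$ is the level over $a\neq 0$: since the closure of the trajectory is of type $\mathbb{C}$, one must have $\ell=0$ and $m<0$ (recall $m\in\mathbb{Z}^{\ast}$ may be negative, which you overlooked), and then $x^my^n=a$ is precisely $y^n-ax^{-m}=0$, so $r=n$, $s=-m$. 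In the zero-level case the level must be a line, $\{x=0\}$ (or $\{y=0\}$ when $\ell=0$), giving $r=s=1$ after a rotation; your claim that $\{x^{\ell}y+p(x)=0\}$ with $\ell>0$, $p(0)\neq 0$ is of type $\mathbb{C}$ is false --- it is the graph $y=-p(x)/x^{\ell}$ over $x\neq 0$, hence of type $\mathbb{C}^{\ast}$, and so it can never be the image of $\mathcal{C}$.
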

\begin{proof}
As $\mathcal{C}\setminus\{p\}$ is a trajectory of type
$\mathbb{C}^{\ast}$ of a polynomial vector field it must be
contained in a level set of (\ref{seq}) by Corollary 1. If the
level is over $a\neq{0}$, as it is of type $\mathbb{C}$, $\ell=0$
and $m<0$. It is enough define $r=n$ and $s=-m$. If the level set
is over zero, necessarily it is a line: $\{x=0\}$ or also $\{y=0\}$ if
$\ell=0$, which has the required form with $r=s=1$ after a
rotation.
\end{proof}

\begin{remark}\em The classification of H. Saito in \cite{Saito-osaka2}
contains polynomials of this form:
$$P=4((xy+1)^{2}+ y)(x(xy+1)+1)^{2}+1$$
Such a $P$ has two singular fibers: $P^{-1}(0)$ and $P^{-1}(1)$ . One of them, $P^{-1}(1)$, is a disjoint union
of two curves of type $\mathbb{C}^{\ast}$, and another, $P^{-1}(0)$, is an irreducible curve of type $\mathbb{C}^{\ast}$. The generic fiber of $P$
is of type $\mathbb{C}\setminus\{0,1\}$. In particular, our Theorem implies that \em if there is a polynomial vector field with a holomorphic
first integral of the form $P\circ \varphi$ with $\varphi$ a holomorphic automorphism then either $\varphi$ is a polynomial automorphism or $(P\circ\varphi)^{-1}(0)$ and
$(P\circ\varphi)^{-1}(1)$ are contained in algebraic curves.
\em
\end{remark}

\section{Proof of Theorem}

Let $C_z$ be the transcendental trajectory of $X$ of type
$\mathbb{C}^{\ast}$. It defines a leaf $L$ of
$\mathcal{F}_X$ of type $\mathbb{C}^{\ast}$ with a transcendental
planar isolated end $\Sigma$ (\em see \em
\cite[Lemma~4.1]{Bustinduy-indiana}). We can apply
\cite[Th\'eor�eme]{Brunella-topology} and conclude that there
exists a polynomial $P$ with generic fiber of type $\mathbb{C}$ or
$\mathbb{C}^{\ast}$ (that we will call of type $\mathbb{C}$ or
$\mathbb{C}^{\ast}$, respectively) such that $\mathcal{F}_X$ is
$P$--complete.
Let us  recall from \cite{Brunella-topology} that $\mathcal{F}_X$ is is
\em $P-$\,complete \em  if there exists a finite set
$\mathcal{Q}\subset\mathbb{C}$ such that for all $t\not\in
\mathcal{Q}$ : $(i)$ $P^{-1}(t)$ is transverse to
$\mathcal{F}_{X}$, and $(ii)$ there is a neighbourhood
$U_t$ of $t$ in $\mathbb{C}$ such that $P:P^{-1}(U_t)\to U_t$ is a
holomorphic fibration and the restriction of ${\mathcal{F}_{X}}$
to $P^{-1}(U_t)$ defines a local trivialization of this fibration.

As noted in \cite[p.\,1229]{Brunella-topology} (\em see \em also
\cite[Remark~2.2]{Bustinduy-indiana}) the set $\mathcal{Q}$
associated to $P$ consists of the critical values of $P$ together
with the regular values of $P$ in which some of the components of
the corresponding fiber are not transversal to $\mathcal{F}_X$,
and then they are invariant by $\mathcal{F}_X$. Thus every leaf of
$\mathcal{F}_X$ is either disjoint from $P^{-1}(\mathcal{Q})$ or
else is  contained in it.

\subsection{$P$ of type $\mathbb{C}$}

If $\mathcal{F}_X$ is  $P$--\,complete with $P$ of type
$\mathbb{C}$ it can be determined explicitly. According to
Abhyankar-Moh and Suzuki Theorem (\cite{Abhyankar-moh-crelle} and
\cite{Suzuki-japonesa}), up to a polynomial automorphism, we
assume that $P=x$. It is pointed out in
\cite[pp.\,1230]{Brunella-topology} (\em see \em also
\cite[Lemma\,2.6]{Bustinduy-indiana}) that a foliation
$\mathcal{F}_X$ on $\mathbb{C}^2$ which is $x$--\,complete is
generated by a vector field of the form:
\begin{equation}\label{RicattiI}
a(x)\frac{\partial}{\partial x} +[b(x)y+c(x)]
\frac{\partial}{\partial y},\,\,a,b,c\in\mathbb{C}[x].
\end{equation}

As $C_z$ is covered by $\mathbb{C}$ the projection of the universal
covering map  by $P$  defines a map from $\mathbb{C}$ to
$a(x)\neq{0}$, and according Picard Theorem we may assume $a(x)=\lambda x^N$
with $\lambda \in\mathbb{C}^{\ast}$. Remark that $C_z\not\subset
\{x=0\}$ since $C_z$ is not algebraic. In fact as $C_z$ is of type
$\mathbb{C}^{\ast}$ it holds $N>0$.

\begin{lemma}
If $L$ is the leaf of $\mathcal{F}_X$ defined by $C_z$, the leaves of
$\mathcal{F}_{X}$ different from the one contained in $\{x=0\}$ are
defined by the sets $f_{\alpha}(L)$, where $f_{\alpha}$ are the
translations in $\mathbb{C}^2$ of the form: $(x,y)\to (x+\alpha,
y)$, $\alpha \in \mathbb{C}$.
\end{lemma}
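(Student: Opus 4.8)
The plan is to integrate the normal form (\ref{RicattiI}) explicitly and to read the leaves off a first integral. Since $a(x)=\lambda x^{N}$ with $N>0$, the unique invariant fibre of the fibration $P=x$ is $\{x=0\}$, which is precisely the leaf excluded in the statement; every other leaf $L'$ meets each fibre $\{x=c\}$ with $c\neq 0$, and the restriction $x|_{L'}$ is an unramified covering onto $\mathbb{C}^{\ast}$. Because the field (\ref{RicattiI}) is affine in $y$, parallel transport along loops in $\mathbb{C}^{\ast}_{x}$ acts on the fibre $\mathbb{C}_{y}$ by affine maps, so the transverse holonomy of $\mathcal{F}_{X}$ around the puncture $x=0$ is a single affine transformation $h(y)=Ay+B$ whose conjugacy class controls the analytic type of the nearby leaves.

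The second step is to solve $\lambda x^{N}y'=b(x)y+c(x)$ by variation of constants, with homogeneous solution $y_{h}(x)=\exp\!\big(\frac{1}{\lambda}\int b(x)x^{-N}\,dx\big)$ and a particular solution obtained by one further quadrature. The decisive point is that $L\cong\mathbb{C}^{\ast}$, rather than $\mathbb{C}$, forces the multiplier $A=\exp\!\big(2\pi i\,\mathrm{Res}_{x=0}\tfrac{b(x)}{\lambda x^{N}}\big)$ to be a root of unity and forbids any logarithmic (resonant) term in the primitive; otherwise the developing map of $L$ would not close up and $L$ would be uniformised by $\mathbb{C}$. Hence, after passing to the finite cyclic cover that trivialises $A$, both $y_{h}$ and the particular solution become single valued, producing an explicit meromorphic first integral $H$ whose generic level set is a graph over a finite cover of $\mathbb{C}^{\ast}_{x}$ and therefore of type $\mathbb{C}^{\ast}$. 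This already yields that every leaf different from $\{x=0\}$ is of type $\mathbb{C}^{\ast}$.

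Finally, the family of non-invariant leaves is the family of level sets $\{H=\mathrm{const}\}$, a one-parameter analytic family parametrised by the transversal $\mathbb{C}$. I would exhibit the one-parameter group that permutes these level sets and check that, in the coordinates normalised above, it is exactly $f_{\alpha}\colon(x,y)\mapsto(x+\alpha,y)$, carrying $L$ to the leaf with shifted parameter and reaching every non-invariant leaf as $\alpha$ ranges over $\mathbb{C}$. The hard part is the holonomy control in the second step: ruling out an infinite-order multiplier and, above all, the resonant logarithmic term, since either would produce a leaf of type $\mathbb{C}$ or a non-proper leaf and would destroy the translation picture. Here the input coming from Brunella's theorem—that $L$ realises a transcendental planar isolated end and is genuinely of type $\mathbb{C}^{\ast}$—together with the earlier results of \cite{Bustinduy-indiana} is exactly what pins down the monodromy, and hence the translations $f_{\alpha}$.
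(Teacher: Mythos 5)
Your route --- reading the leaves off the affine holonomy representation of $\pi_1(\mathbb{C}^{\ast}_x)$ and then pinning down its multiplier --- is genuinely different from the paper's, which integrates (\ref{RicattiI}) explicitly by variation of constants and compares the parametrizations of two leaves meeting a common horizontal line. But your decisive step is wrong, not merely unproven. From $L\cong\mathbb{C}^{\ast}$ you may only conclude that the point of the transversal corresponding to $L$ is a \emph{periodic} point of the holonomy $h(y)=Ay+B$; the fixed point of $h$ is such a point for \emph{every} $A$, so $L\cong\mathbb{C}^{\ast}$ does not force $A$ to be a root of unity. Moreover, no appeal to properness, to Brunella's theorem, or to the earlier results can exclude this fixed-leaf case, because it occurs under all the standing hypotheses: for $X=x\,\partial/\partial x+\bigl((\sqrt{2}+x)y+1\bigr)\,\partial/\partial y$ the unique solution $Y^{\ast}$ of $xy'=(\sqrt{2}+x)y+1$ holomorphic at $0$ has coefficients $y_{0}=-1/\sqrt{2}$, $y_{j}=y_{j-1}/(j-\sqrt{2})\neq 0$, hence is entire and transcendental; its graph punctured at the singular point $(0,-1/\sqrt{2})$ is a proper, non-algebraic trajectory of type $\mathbb{C}^{\ast}$, the foliation is $x$-complete with $a(x)=x$, and yet $A=e^{2\pi i\sqrt{2}}$ has infinite order (and the other leaves $y=Y^{\ast}(x)+Cx^{\sqrt{2}}e^{x}$ are non-proper, of type $\mathbb{C}$). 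So the monodromy simply cannot be ``pinned down'' from the stated hypotheses.

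Your final step fails independently: even with trivial holonomy the non-invariant leaves need not be horizontal translates of one another. For $X=x\,\partial/\partial x+(x+1)y\,\partial/\partial y$ one has $a(x)=x$, the leaf $L=\{y=xe^{x},\,x\neq 0\}$ is a transcendental trajectory of type $\mathbb{C}^{\ast}$, the foliation is $x$-complete with single-valued first integral $ye^{-x}/x$; but the other non-invariant leaves are $\{y=Cxe^{x}\}$, related to $L$ by the vertical scalings $(x,y)\mapsto(x,Cy)$, not by translations. Indeed no $f_{\alpha}(L)$ with $\alpha\neq 0$ can be a leaf at all: every non-invariant leaf covers $\mathbb{C}^{\ast}$ under $x$ (a linear equation has no movable singularities), so $f_{\alpha}(L)$ hits the invariant line $\{x=0\}$ at a regular point. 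Thus the ``check'' you defer to the end would come out negative, and this is not a gap you could have filled. For your information, the paper's own proof trips on the same stone: in its parametrization $\gamma_{(x,y)}(t)$ the second coordinate depends on the basepoint $x$ through $\bar{b}(s+x)$ and $\bar{c}(u+x)$, not only on $t$, so the asserted identity $L'=f_{x_{1}-x_{2}}(L)$ for two leaves meeting $\{y=y_{1}\}$ does not follow; the two examples above show that the Lemma as stated (and, via the first example of this review with its non-proper leaves, even the weaker consequence the paper extracts from it) fails, so the obstruction you met is in the statement itself, not in your execution.
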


\begin{proof}
Let us divide (\ref{RicattiI}) by $\lambda x^{N}$. The system
obtained can be integrated explicitly as a linear equation: For a
fixed $z=(x,y)\in\mathbb{C}^2$, from the first equation
$x(t)=t+x$. By substitution of it in the second equation
if $y=uv$ we get
$$(uv)'=uv'+u'v=\bar{b}(x(t))uv+\bar{c}(x(t)),$$
with $\bar{b}(x)= b(x)/\lambda x^{N}$ and $\bar{c}(x)=c(x)/\lambda x^{N}$. If
$v'=\bar{b}(x(t))v$ then $v(t)=e^{\int \bar{b}(x(s))ds}$ and
$u'v=\bar{c}(x(t)).$ Hence
$$u(t) = \mu + \int \bar{c}(x(u))\,e^{-\left[\int \bar{b}(x(s))ds\right]}\,du,\,\,\,\,
\,\,\mu\in\mathbb{C}.$$ The trajectories of $X$ different from one
contained in $\{x=0\}$ are the subsets in $\mathbb{C}^2$ defined
by the images $\gamma_{(x,y)}(\mathbb{C}\setminus\{-x\})$
of the (mulivaluated) parametrizations
$$
\gamma_{(x,y)}(t)=\left(t+x,\,\, \left\{ y+\int^{t}
\bar{c}( u +x)\, e^{-\left[\int^{u}\bar{b}( s + x)ds
\right]}du \right\} e^{\int^{t}\bar{b}( s + x)ds}\right).
$$

Let $L'$ be a leaf of $\mathcal{F}_{X}$ such that $L'\neq L$ and
$L' \not\subset \{x=0\}$. There is at least one (in fact there are
lots of them) $z_{1}= (x_{1}, y_{1})\in C_z$ such that
$\{y=y_{1}\}\cap L' \neq \emptyset$. If
$z_2=(x_{2},y_{1})\in\{y=y_{1}\}\cap L'$ then $L'=C_{z_2}=
\gamma_{(x_{2},y_{1})}(\mathbb{C}\setminus\{-x_{2}\})$. As
$L=\gamma_{(x_{1},y_{1})}(\mathbb{C}\setminus\{-x_{1}\})$
since $C_{z_{1}}=C_z$ we see that $L'=f_{\alpha}(L)$ with
$\alpha=x_1-x_2$.
\end{proof}

As $L$ is proper by hypothesis and the maps $f_{\alpha}$ are
linear automorphisms \em the leaves of $\mathcal{F}_X$ different
from the one defined by $\{x=0\}$ are proper and biholomorphic to
$L$, i.e. of type $\mathbb{C}^{\ast}$. \em
\subsection{$P$ of type $\mathbb{C}^{\ast}$}
The situation is completely different to the previous one, since
in this case there are many distinct polynomials of type
$\mathbb{C}^{\ast}$ after a polynomial automorphism. According to
Saito and Suzuki  (\cite{Saito-osaka} and \cite{Suzuki-anales}),
up to a polynomial automorphism, we may assume that $P=x^
{m}(x^{\ell}y+p(x))^{n}$, where $m,n\in\mathbb{N}^\ast$ with
$(m,n)=1$, $\ell\in\mathbb{N}$, $p\in\mathbb{C}[x]$ of degree $<
\ell$ with $p(0)\neq{0}$ if $\ell>0$ or $p(x)\equiv{0}$ if
$\ell=0$.

\subsubsection*{{\bf New coordinates}} By the relations
$ x=u^n\,\,\,\,\textnormal{and}\,\,\,\,x^{\ell}y+p(x)=v \,u^{-m},$
it is enough to take the rational map $H$ from $u\neq {0}$ to
$x\neq{0}$ defined by
\begin{equation}\label{relaciones}
(u,v)\mapsto (x,y)=(u^n, {u^{-(m+n\ell)}} [v-u^m p(u^n)])
\end{equation}
in order to get $P\circ H(u,v)=v^n$.

It follows from the proof of \cite[Proposition 3.2]
{Bustinduy-indiana} that $H^{\ast}\mathcal{F}$ is a Riccati
foliation $v$-complete having $u=0$ as invariant line. Still more,
according to \cite[Lemma\,2]{Bustinduy-ijm} at least one of the
irreducible components of $P$ over $0$ must be a
$\mathcal{F}_{X}$-invariant line. \em Therefore we may assume that $\{x=0\}$ is invariant by
$\mathcal{F}_{X}$. \em As $H$ is a finite \em regular \em covering map from
$u\neq 0$ to $x\neq 0$, it implies that each component of
$H^{-1}(C_z)$ is of type $\mathbb{C}^{\ast}$ and then covered by
$\mathbb{C}$. Thus according to Picard's Theorem
\begin{equation} \label{hest}
\begin{split}
H^{\ast} X = &  u^{k}\cdot Z \\
           =& u^{k} \cdot \left\{a(v)u \frac{\partial}{\partial u} +
cv^N\frac{\partial}{\partial v}\right\},
\end{split}
\end{equation}
where $k\in\mathbb{Z}$, $a\in\mathbb{C}[v]$, $c\in\mathbb{C}$, and
$N\in\mathbb{N}^{+}$.

\subsubsection*{{\bf The global one form of times.}} Let us take the one-form $\eta$
obtained when we remove the codimension one zeros and poles of $d
P(x,y)$.  The contraction of $\eta$ by $X$, $\eta(X)$, is a
polynomial, which vanishes only on components of fibres of $P$
since $X$ has only isolated singularities.  Then, up to
multiplication by constants:
\begin{equation} \label{polinomio}
\eta(X)= x^{\alpha}\cdot{(x^{\ell} y + p(x))}^{\beta}
\end{equation}
where $\alpha\in\mathbb{N}^{+}$ (since $\{x=0\}$ is invariant) and $\beta\in \mathbb{N}$.
If we define $\tau=[1/\eta(X)]\cdot\eta$, this one-form on $\eta(X)\neq{0}$ coincides locally along
each trajectory of $X$  with the \em differential of times \em
given by its complex flow. It is called the global one-form of
times for $X$. Moreover $\tau$ can be easily calculated attending
to (\ref{polinomio}) as
\begin{equation}\label{tau}
\tau= \dfrac{x(x^{\ell} y + p(x))}{\eta(X)} \cdot \dfrac{d P}{P}.
\end{equation}
In $(u,v)$ coordinates  we then get
\begin{equation}\label{rho1}
\varrho=H^{\ast}\tau=
\dfrac{u^{m(\beta-1)-n(\alpha-1)}}{v^{\beta-1}}\,\cdot \dfrac{d
v^n}{v^n}
\end{equation}
It holds that $\varrho(H^{\ast} X)\equiv{1}$. Since $\varrho -
1/(u^k \cdot cv^{N})\,dv$ contracted by $H^{\ast}X$ is identically
zero  and we can assume that there is no rational first integral, up to multiplication by constants
\begin{equation}\label{rho2}
\varrho=1/(u^k \cdot cv^{N})\,dv.
\end{equation}
Therefore, (\ref{rho1}) and (\ref{rho2}) must be equal and thus
$k$ of (\ref{hest}) can be explicitly calculated:
$k=n(\alpha-1)-m(N-1)$. Finally, let us observe that  for any path
$\epsilon$ contained in a trajectory of $X$ from $p$ to $q$ that
can be lifted by $H$ as $\tilde{\epsilon}$,
$\int_{\tilde{\epsilon}}\,\varrho$ represents the complex time
required by the flow of $X$ to travel from $p$ to $q$.

\subsubsection*{{\bf Existence of a meromorphic first integral}}
Our aim is to prove that there is an explicit meromorphic first
integral for $X$. We will obtain that as a consequence of the
following lemmas:

\begin{lemma} It holds that $n|k$, $n|(N-1)$ if $N>1$, and $a\in\mathbb{C}[z^{n}]$.
\end{lemma}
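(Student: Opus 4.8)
The plan is to exploit that $H$ is the quotient map of a cyclic covering and that $X$, being single-valued downstairs, forces $H^{\ast}X=u^{k}Z$ to be invariant under the deck group. Since $H\circ\sigma=H$ precisely when $\sigma(u,v)=(\zeta u,\zeta^{m}v)$ with $\zeta^{n}=1$ (one checks this directly from (\ref{relaciones}), using $\zeta^{n\ell}=1$ to absorb the $p(u^{n})$ term), the deck group is cyclic of order $n$, generated by $\sigma_{\zeta}$ with $\zeta$ a primitive $n$-th root of unity. First I would record that $(\sigma_{\zeta})_{\ast}(H^{\ast}X)=H^{\ast}X$ for every $\zeta$ with $\zeta^{n}=1$.

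Next I would read off the arithmetic content of this invariance. Writing $a(v)=\sum_{j}a_{j}v^{j}$, invariance of the $\partial/\partial u$--component $u^{k+1}a(v)$ of $u^{k}Z$ yields $a(v)=\zeta^{-k}a(\zeta^{-m}v)$, hence $a_{j}=\zeta^{-(k+mj)}a_{j}$; comparing coefficients, $a_{j}\neq 0$ implies $\zeta^{\,k+mj}=1$ for all $\zeta$ with $\zeta^{n}=1$, that is $n\mid(k+mj)$. (The $\partial/\partial v$--component only reproduces $n\mid(k+m(N-1))$, which is already guaranteed by $k=n(\alpha-1)-m(N-1)$, so the usable information sits entirely in the $\partial/\partial u$--component.)

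The decisive point --- and the step I expect to be the main obstacle --- is to show that the constant term $a(0)=a_{0}$ is nonzero. Here I would use that $X$ has isolated zeros together with the fact that on $\{u\neq 0\}$ the map $H$ is an unramified covering: indeed $\det DH=n\,u^{\,n-1-m-n\ell}$ never vanishes there, so $H^{\ast}X$ vanishes exactly on the finite set $H^{-1}(\mathrm{Zeros}(X))$, and since $u^{k}$ is invertible on $\{u\neq 0\}$ the same holds for $Z$. But if $a(0)=0$ then both components $a(v)u$ and $cv^{N}$ of $Z$ vanish identically along the curve $\{v=0\}\cap\{u\neq 0\}$, a one-dimensional zero set, contradicting isolatedness; hence $a_{0}\neq 0$. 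Taking $j=0$ in the congruence above gives $n\mid k$ at once; since $(m,n)=1$ and $k=n(\alpha-1)-m(N-1)$, this forces $n\mid m(N-1)$ and so $n\mid(N-1)$ (trivial when $N=1$). Finally, knowing $n\mid k$ turns $n\mid(k+mj)$ into $n\mid mj$, hence $n\mid j$ for every $j$ with $a_{j}\neq 0$, which is exactly $a\in\mathbb{C}[v^{n}]$. The only genuinely geometric input is the isolated-zeros argument for $a_{0}\neq 0$; everything else is a formal character computation over $\mathbb{Z}/n\mathbb{Z}$.
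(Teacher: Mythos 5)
Your proof is correct and is essentially the paper's own argument in covering-space language: the paper deduces $n\,|\,k$ and $a\in\mathbb{C}[z^{n}]$ by demanding that the first component $n\,x^{(k+n)/n}a(x^{m/n}(x^{\ell}y+p(x)))$ of the pushed-forward field (\ref{Y}) be a (single-valued) polynomial, which is precisely your invariance of $H^{\ast}X$ under the deck transformations $(u,v)\mapsto(\zeta u,\zeta^{m}v)$, $\zeta^{n}=1$. Likewise, your isolated-zeros argument for $a(0)\neq 0$ and your use of $k=n(\alpha-1)-m(N-1)$ together with $(m,n)=1$ to get $n\,|\,(N-1)$ are exactly the paper's steps, only spelled out in more detail.
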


\begin{proof}
We assume that $\beta=N$ and $\alpha\in \mathbb{N}^{+}$ in
(\ref{rho1}). Let us observe that $X$ can be explicitly calculated
as
\begin{equation}\label{Y}
X=u^{k}\cdot H_{\ast} (a(v)u\frac{\partial}{\partial u} +cv^N
\frac{\partial}{\partial v})= u^{k} \cdot DH(u,v) \cdot \left(
\begin{array}{c}
a(v)u \\
cv^N
\end{array}
\right)
\end{equation}
where
$$
DH(u,v) =\left(
\begin{array}{cc}
nu^{n-1} & 0 \\
      \\
\dfrac{n\ell u^mp(u^n)-u^{n+m}p'(u^n)-(m + n\ell)v}{u^{m+n\ell+1}}
& \dfrac{1}{u^{m+n\ell}}
\end{array}
\right)
$$
and $u={x}^{1/n}$ and $v={x}^{m/n}\,(x^{\ell}y + p(x))$.

\medskip

Remark  that $a(0)\neq{0}$. Otherwise $X$ had not isolated
singularities since $N>0$. The first component $n
{x}^{(k+n)/n}a({x}^{m/n}\,(x^{\ell}y + p(x)))$ of (\ref{Y}) must
be a polynomial. Then $n|k$. On the other hand $n|(N-1)$ when
$N>1$ since $k=n(\alpha-1)-m(N-1)$ and $(m,n)=1$. It implies that
$a\in\mathbb{C}[z^{n}]$.
\end{proof}
\begin{lemma}\label{lema2}
Let $v_0\neq{0}$. The trajectories of $H^{\ast}X$ except the
horizontal ones and the line $\{u=0\}$ are parameterized by maps
$\sigma(w_0,t)$, where $w_0$ is a fixed point and $\sigma$ is a
multivaluated holomorphic map defined on $\mathbb{C}^{\ast} \times
\mathbb{C}^{\ast}$ of the form
\begin{equation}\label{familia}
\sigma(w,t)=(u(w,t),v(w,t))=
(we^{\int_{v_0}^t\frac{a(z)}{cz^{N}}\,dz}, t).
\end{equation}
\end{lemma}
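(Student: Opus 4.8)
The plan is to reduce the statement to an elementary integration of the leaf equation of the model field $Z$. Since $H^{\ast}X=u^{k}\cdot Z$ by (\ref{hest}) and the factor $u^{k}$ is nowhere zero off the invariant line $\{u=0\}$, the trajectories of $H^{\ast}X$ outside $\{u=0\}$ coincide, as unparametrised curves, with those of $Z=a(v)u\,\partial_u+cv^{N}\partial_v$; multiplying by $u^{k}$ only reparametrises the complex time. So it suffices to describe the leaves of $Z$, and implicitly $c\neq0$ in view of the form of (\ref{familia}).

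First I would restrict to the region $\{v\neq 0\}$, where $\dot v=cv^{N}\neq 0$, so that along any trajectory not contained in $\{u=0\}$ the coordinate $u$ is locally a function of $v$ and satisfies the linear homogeneous equation
\begin{equation*}
\frac{du}{dv}=\frac{\dot u}{\dot v}=\frac{a(v)}{cv^{N}}\,u .
\end{equation*}
Separating variables and fixing the integration constant by the value $w:=u|_{v=v_{0}}$ at the base point $v_{0}\neq 0$ gives
\begin{equation*}
u(v)=w\exp\!\left(\int_{v_{0}}^{v}\frac{a(z)}{cz^{N}}\,dz\right),
\end{equation*}
which, writing $t$ for $v$, is exactly $\sigma(w,t)$ of (\ref{familia}). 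To finish I would argue exhaustiveness: $\{v=0\}$ is the only invariant (``horizontal'') fibre of the projection $v$ and $\{u=0\}$ is the invariant section, while no leaf can lie in a fibre $\{v=v_{1}\neq0\}$ (there $\dot v\neq0$) nor in $\{u=u_{1}\neq0\}$ (there $\dot u=a(v)u_{1}$ is not identically zero since $a(0)\neq0$). Hence every leaf other than these two meets the open set $\{u\neq0,\,v\neq0\}$, where it is a graph over $v$, and is recovered by analytic continuation as the image of $t\mapsto\sigma(w_{0},t)$ for a unique $w_{0}\in\mathbb{C}^{\ast}$, distinct $w_{0}$ giving distinct leaves.

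The step requiring care is the description of the domain and the multivalued character of $\sigma$, which I expect to be the main obstacle to state cleanly. The integrand $a(z)/(cz^{N})$ is rational with its only pole at $z=0$, of residue $r=\tfrac1c\,[z^{N-1}]\,a(z)$, so its primitive has the form $r\log z+(\text{single-valued})$ and
\begin{equation*}
\exp\!\left(\int_{v_{0}}^{t}\frac{a(z)}{cz^{N}}\,dz\right)=\Big(\tfrac{t}{v_{0}}\Big)^{r}\cdot g(t),
\end{equation*}
with $g$ single-valued and non-vanishing on $\mathbb{C}^{\ast}$. Thus $\sigma$ is genuinely multivalued in $t$ over $\mathbb{C}^{\ast}$ (single-valued precisely when $r\in\mathbb{Z}$), the branches differing by the factors $e^{2\pi i m r}$, $m\in\mathbb{Z}$, produced by loops around $0$. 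Finally I would record why the natural domain is $\mathbb{C}^{\ast}\times\mathbb{C}^{\ast}$: one needs $w\neq0$ to stay off the excluded section $\{u=0\}$, and $t=v\neq0$ both to remain off the excluded horizontal fibre $\{v=0\}$ and to avoid the pole of the integrand.
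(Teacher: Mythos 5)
Your proposal is correct and is essentially the paper's own argument: the paper likewise passes to $\tfrac{1}{cv^{N}}\,Z$ (equivalently, takes $v$ as the independent variable), solves the resulting linear equation for $u$ through the point determined by $w_{0}$ and $v_{0}$, and extends by analytic continuation along paths in $\mathbb{C}^{\ast}$, quoting \cite[Section 2]{Bustinduy-jde} for the explicit formula (\ref{familia}), which you instead derive directly. The only flaw is your closing side remark that distinct $w_{0}$ give distinct leaves: the branches at a fixed $t$ differ by exactly the monodromy factors $e^{2\pi i m r}$ that you compute, so $w_{0}$ and $w_{0}e^{2\pi i m r}$ parameterize the same leaf whenever $r\notin\mathbb{Z}$; since the lemma asserts no such injectivity, this does not affect the validity of your proof of the statement.
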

\begin{proof} Let us take the local solution through
$(u(w_0,v_0),v(w_0,v_0))$, with $w_0\in \mathbb{C}^{\ast}$, of
$1/c(v) \cdot Z$ extending by analytic continuation along paths in
$\mathbb{C}^{\ast}$. This map is defined as $\sigma(w_0,t)$ with
$\sigma$ equals (\ref{familia}) (\em see \em \cite[Section
2]{Bustinduy-jde}).
\end{proof}
\begin{lemma}\label{lema3}
$X$ has a multivaluated meromorphic first integral.
\end{lemma}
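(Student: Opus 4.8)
The plan is to make explicit the first integral of $H^{\ast}X$ that is already implicit in Lemma~\ref{lema2} and then push it down to $\mathbb{C}^2$ through the covering $H$ of (\ref{relaciones}). First I would read a first integral of $Z$ (hence of $H^{\ast}X=u^kZ$) directly off the parametrization (\ref{familia}): solving $u=w\,e^{\int_{v_0}^{v}\frac{a(z)}{cz^{N}}dz}$ for the parameter $w$ gives
\begin{equation*}
F(u,v)=u\,e^{-\int_{v_0}^{v}\frac{a(z)}{cz^{N}}\,dz},
\end{equation*}
whose logarithmic differential
\begin{equation*}
\frac{dF}{F}=\frac{du}{u}-\frac{a(v)}{cv^{N}}\,dv
\end{equation*}
is a closed meromorphic $1$-form that annihilates $Z=a(v)u\,\partial_u+cv^{N}\partial_v$, as one checks at once. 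Since $H^{\ast}X=u^{k}Z$ by (\ref{hest}), this same form annihilates $H^{\ast}X$.

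The decisive step is to show that $dF/F$ is the pull-back of a rational $1$-form on the base of $H$. Recall that $H$ is the $n$-fold covering $\{u\neq0\}\to\{x\neq0\}$ with $x=u^{n}$ and $P\circ H=v^{n}$, whose deck group is generated by $(u,v)\mapsto(\zeta u,\zeta^{m}v)$ with $\zeta^{n}=1$. Writing $a(z)=\hat a(z^{n})$ by Lemma~1 and $N-1=ns$ (which is legitimate because $n\mid(N-1)$, again Lemma~1), the form $a(v)v^{-N}dv$ is deck-invariant, because $a(\zeta^{m}v)=a(v)$ and $\zeta^{m(1-N)}=\zeta^{-mns}=1$, while $du/u$ is trivially invariant. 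Hence $dF/F$ descends to a closed rational $1$-form $\omega$ on $\{x\neq0\}$; performing the substitution $w=v^{n}=P$ makes it explicit,
\begin{equation*}
\omega=\frac{1}{n}\left(\frac{dx}{x}-\frac{1}{c}\,\frac{\hat a(P)}{P^{\,s+1}}\,dP\right),
\qquad P=x^{m}(x^{\ell}y+p(x))^{n},
\end{equation*}
and by construction $H^{\ast}\omega=dF/F$.

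Finally, from $H^{\ast}(\omega(X))=(H^{\ast}\omega)(H^{\ast}X)=(dF/F)(u^{k}Z)=u^{k}\cdot0=0$ together with the dominance of $H$ I conclude $\omega(X)\equiv0$, so that $G=\exp\!\int\omega$ is a multivaluated meromorphic first integral of $X$ with $dG/G=\omega$. I expect the descent of the previous paragraph to be the only genuine obstacle: everything rests on the deck-invariance of $dF/F$, and this is exactly where the arithmetic conclusions of Lemma~1 ($a\in\mathbb{C}[z^{n}]$ and $n\mid(N-1)$) are indispensable, for otherwise the $v$-part of $dF/F$ would not be a rational function of $P$. It is worth noting that the pole order $s+1$ of $\omega$ along $\{P=0\}$, controlled by $N$, measures how far $G$ is from being of Darboux type: when $N=1$ the pole is logarithmic and one recovers a first integral of the shape $x^{\mu}(x^{\ell}y+p(x))^{\nu}$ matching (\ref{seq}), so eliminating the higher-order part is what the remaining analysis of completeness and $\mathbb{C}^{\ast}$-type will have to take care of.
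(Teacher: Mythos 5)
Your proof is correct, but the step where you transfer the first integral down to $\mathbb{C}^2$ takes a genuinely different route from the paper's. Both arguments begin identically: the parametrization (\ref{familia}) of Lemma~\ref{lema2} immediately yields the first integral $F(u,v)=u\,e^{-\int_{v_0}^{v}a(z)/(cz^{N})\,dz}$ of $H^{\ast}X$; the paper writes the same function as $F=u/\Gamma(v)$ in (\ref{primerauv}) after integrating the partial-fraction expansion (\ref{desarrollofraccion}) explicitly. The divergence is in the descent. The paper simply substitutes the multivalued inverse relations $u=x^{1/n}$, $v=x^{m/n}(x^{\ell}y+p(x))$ of (\ref{relaciones}) into $F$ to get $G$; since the lemma only claims a \emph{multivaluated} first integral, no deck-invariance is needed, and in particular the paper's proof of this lemma uses nothing from Lemma~2. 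You instead insist on descending a single-valued object, the closed form $dF/F$, and for that you genuinely need the deck-invariance supplied by the arithmetic facts $a\in\mathbb{C}[z^{n}]$ and $n\mid(N-1)$ --- note these are the content of Lemma~2, not Lemma~1 as you cite (and Lemma~2 states $n\mid(N-1)$ only for $N>1$; for $N=1$ it is trivial, so your $s$ is well defined in every case). Your computation is sound: the deck group of $H$ is indeed generated by $(u,v)\mapsto(\zeta u,\zeta^{m}v)$, and one checks $H^{\ast}\omega=dF/F$ with your explicit $\omega$. What your route buys is a structurally stronger statement than the lemma asks for: $\mathcal{F}_X$ admits a closed \emph{rational} $1$-form $\omega$ with $\omega(X)\equiv0$, so the first integral is of Darboux--Liouville type and its multivaluedness is entirely that of $\int\omega$. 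What the paper's route buys is the explicit factor $\Gamma$ of (\ref{Gamma}), which is precisely the object the next lemma manipulates (essential singularity of $\Gamma$ at $v=0$, the Picard property, the normalization $\lambda_{1}=p/q$) and which appears in the closing formula for $G^{nq}$; with your unexpanded $\omega$ one would have to redo the partial-fraction expansion at that stage anyway. A last nitpick on your closing remark: for $N=1$ the first integral is not quite of the shape $x^{\mu}(x^{\ell}y+p(x))^{\nu}$, since the factor $e^{\bar{s}(P)}$ survives (it is present in the paper's final expression for $G^{nq}$); this is outside the lemma's claim, but worth keeping straight.
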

\begin{proof}
The one-form of (\ref{familia}), that we denote by $\omega$, has a
fraction expansion
\begin{equation}
\label{desarrollofraccion} \dfrac{a(z)}{cz^{N}}\,dz= \left( s(z) +
\frac{A_{1}}{z}+\frac{A_{2}}{{z}^2}+ \cdots+\frac{A_{N}}{{z}^{N}}\right)
dz ,\,\,\,
\end{equation}
where $s(z)\in\mathbb{C}[z]$, and $A_{i}\in\mathbb{C}^{\ast}$, for $1
\leq i \leq N$. Let us fix
\begin{equation}
\label{Gamma} \Gamma (z) =  e^{\,\,\bar{s}(z)} \cdot
             e^{ \,\,\lambda_{1}
\textnormal{log} \, z + \frac{\lambda_{2}}{z}+
\cdots+\frac{\lambda_{N}}{{\,\,z}^{N-1}}}
\end{equation}
where $\bar{s}(z)=\int^z s(t)dt$, and $\lambda_{1}= A_{1}$ and
$\lambda_{i}=A_{i}/(-i+1)$ for $2 \leq i \leq N$. If we substitute
(\ref{desarrollofraccion}) in (\ref{familia}), after explicit
integration of $\omega$, one has that $\sigma(w,t)$ is of the form
$\left(w \cdot \Gamma(t)/\Gamma(v_0), \,t \right)$. Then
\begin{equation}
\label{primerauv} F(u,v)=\dfrac{u}{\Gamma(v)}
\end{equation}
is a first integral of $H^{\ast}X$. Finally, we can express
(\ref{primerauv}) in terms of $x$ and $y$ by (\ref{relaciones}),
$$
G(x,y)= \frac{x^{1/n}}{\Gamma(x^{m/n}\cdot(x^{\ell}y+p(x)))}\,,
$$
and thus obtain a (multivaluated meromorphic) first integral of
$X$.
\end{proof}

\begin{lemma}
$N=1$, $\lambda_{1}=p/q\in\mathbb{Q}$ and $\bar{s}\in
\mathbb{C}[z^n]$
\end{lemma}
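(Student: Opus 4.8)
The plan is to read all three assertions off the explicit factorization of $\Gamma$ in (\ref{Gamma}), using the two facts already at our disposal about the leaf $\tilde L$ of $H^{\ast}X$ lying over $C_z$: that it is of type $\mathbb{C}^{\ast}$, and that $C_z$ is proper. Throughout I would work with the parametrization $\sigma(w,t)=(w\,\Gamma(t)/\Gamma(v_0),\,t)$ of Lemma~\ref{lema2} and regard $\tilde L$ as a covering of the base $\mathbb{C}^{\ast}_v$ via the projection $t\mapsto v=t$.

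First I would pin down $\lambda_1$. In $\Gamma(t)=e^{\bar s(t)}\,t^{\lambda_1}\,e^{\lambda_2/t+\cdots+\lambda_N/t^{N-1}}$ only the factor $t^{\lambda_1}$ is multivalued, so the monodromy of the first coordinate of $\sigma$ around $t=0$ is multiplication by $e^{2\pi i\lambda_1}$, all other factors being single valued. Hence the deck group of $\tilde L\to\mathbb{C}^{\ast}_v$ is the image of the representation $\pi_1(\mathbb{C}^{\ast})\to\mathbb{C}^{\ast}$, $1\mapsto e^{2\pi i\lambda_1}$. Since $\tilde L$ is of type $\mathbb{C}^{\ast}$ and not $\mathbb{C}$, this covering cannot be the universal one, so it has finite degree $q$; finite degree forces $e^{2\pi i\lambda_1}$ to have finite order, i.e. $\lambda_1=p/q\in\mathbb{Q}$.

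The hard part is the equality $N=1$. Assume $N\geq 2$. Since $a(0)\neq 0$ we have $A_N=a(0)/c\neq 0$, hence $\lambda_N\neq 0$ and $v=0$ is an irregular singular point of the Riccati equation $\frac{du}{dv}=\frac{a(v)}{cv^{N}}u$; equivalently, $\Gamma$ has a genuine essential singularity at $t=0$ through $e^{\lambda_N/t^{N-1}}$. I would then examine the end of $C_z$ lying over $t=v\to 0$. By $P\circ H=v^n$ this end sits over the finite fibre $P^{-1}(0)$, and since $x=u^n$ with $u=w\,\Gamma(t)/\Gamma(v_0)$, it approaches the invariant line $\{x=0\}$. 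By the Great Picard theorem, along rays with $\mathrm{Re}(\lambda_N/t^{N-1})\to+\infty$ one has $|u|\to\infty$, so $x\to\infty$, while along other rays $u\to 0$; thus this single end simultaneously accumulates on $L_{\infty}$ and near finite points of $\{x=0\}$. Consequently $\overline{C}_z$ cannot be a one dimensional analytic curve and $C_z$ fails to be proper, contradicting the hypothesis. The delicate point is exactly this incompatibility, which I would formulate as follows: a proper trajectory of type $\mathbb{C}^{\ast}$ has two ends, and since \cite[Th\'eor\`eme]{Brunella-topology} already supplies one transcendental planar isolated end $\Sigma$ (realized here at $v\to\infty$, where $x=u^n\to\infty$), the remaining end at $v\to 0$ must be tame, with $u$ of power-like growth. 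This is possible only when $v=0$ is a regular (logarithmic) singular point of the Riccati equation, i.e. $N=1$.

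Finally, with $N=1$ the terms $\lambda_2/t+\cdots+\lambda_N/t^{N-1}$ disappear and $\Gamma(t)=e^{\bar s(t)}\,t^{\lambda_1}$, so it only remains to check $\bar s\in\mathbb{C}[z^n]$, which is a direct computation from the preceding Lemma. Writing $a(z)=\sum_{j\geq 0}a_j z^{jn}\in\mathbb{C}[z^n]$ with $a_0=a(0)\neq 0$, the expansion (\ref{desarrollofraccion}) of $a(z)/(cz)$ has polynomial part $s(z)=\sum_{j\geq 1}(a_j/c)\,z^{jn-1}$ and simple pole $A_1/z$ with $A_1=a_0/c$. Integrating gives $\bar s(z)=\sum_{j\geq 1}\frac{a_j}{c\,jn}\,z^{jn}\in\mathbb{C}[z^n]$, while $\lambda_1=A_1=a_0/c$ is precisely the rational number $p/q$ found above. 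This settles the three assertions.
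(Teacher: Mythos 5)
Your argument for $\lambda_{1}\in\mathbb{Q}$ is correct and genuinely different from the paper's: you read rationality off the monodromy of the covering $\tilde L\to\mathbb{C}^{\ast}_{v}$, using only that the components of $H^{-1}(C_z)$ are of type $\mathbb{C}^{\ast}$ (a connected covering of $\mathbb{C}^{\ast}$ is either finite, hence $\mathbb{C}^{\ast}$, or universal, hence $\mathbb{C}$), whereas the paper normalizes $\omega$ to $\lambda_{1}/z\,dz$ via Martinet--Ramis and then invokes Loray's description of leaf closures, using properness of $C_z$. The computation of $\bar s$ is also fine and is the same as the paper's ($zs(z)=a(z)-a(0)$ with $a\in\mathbb{C}[z^{n}]$ from the previous lemma).

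The step $N=1$, however, which is the crux of the lemma, has a genuine gap. From $u=w\,\Gamma(t)/\Gamma(v_0)$ you note that along rays with $\mathrm{Re}(\lambda_{N}/t^{N-1})\to-\infty$ one has $u\to 0$, and you conclude that the end accumulates ``near finite points of $\{x=0\}$''. That conclusion is false: by (\ref{relaciones}), $y=u^{-(m+n\ell)}\bigl[v-u^{m}p(u^{n})\bigr]$, and along those rays $|u|$ decays exponentially in $1/|t|^{N-1}$ while $|v|=|t|$ decays only polynomially, so $|y|\geq |u|^{-(m+n\ell)}\bigl(|v|-|u|^{m}|p(u^{n})|\bigr)\to\infty$. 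Hence these image points also escape to $L_{\infty}$ (towards $[0:1:0]$), exactly as on the rays where $u\to\infty$. Your ray dichotomy therefore only shows that the end over $v\to 0$ accumulates on $L_{\infty}$, which is no contradiction at all: every transcendental proper trajectory accumulates on $L_{\infty}$. The contradiction requires finite accumulation points, i.e. points of the leaf with $v\to 0$ and $u$ in a fixed compact of $\mathbb{C}^{\ast}$; their $H$-images have $x=u^{n}$ bounded away from $0$ and $\infty$ and $x^{\ell}y+p(x)=vu^{-m}\to 0$, so they accumulate on finite points of the curve $\{x^{\ell}y+p(x)=0\}$ --- not of $\{x=0\}$. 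This is exactly what the paper extracts from the Picard-type theorem of Hukuhara--Kimura--Matuda: $\Gamma$ attains every value of $\mathbb{C}^{\ast}$ in every punctured disc about $v=0$, so the leaf $u=\lambda\Gamma(v)$ accumulates on all of $\{v=0,\,u\neq 0\}$, whence $\lim(C_z)\cap\mathbb{C}^{2}$ contains the one-dimensional set $\{x^{\ell}y+p(x)=0,\ x\neq 0\}$, contradicting that $\overline{C}_z\setminus C_z$ is discrete for a proper trajectory. Even if you repaired the finite accumulation (e.g. by an intermediate-value argument on circles $|v|=\epsilon$, where $|u|$ passes from small to large), you would still need to justify why a \emph{single} end accumulating simultaneously at a finite point and on $L_{\infty}$ violates properness (connectedness of the limit set of that end); your fallback --- ``the remaining end must be tame, which forces a regular singular point'' --- merely restates the claim $N=1$ rather than proving it.
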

\begin{proof}
When $N>1$ the function $\Gamma(v)$ has an essential singularity at
$v=0$ (for definition of essential singularity of a multivaluated map see \cite[p.\,7]{Huku}).
On the other hand, (\ref{desarrollofraccion}) and (\ref{Gamma}) imply that $\Gamma(v)$ is solution of the differential equation
$$
\dfrac{w'}{w}= \dfrac {v^Ns(v)+v^{N-1}A_{1}+\cdots+A_{N}}{v^{N}}
$$
This differential equation is of the form
\begin{equation}
\label{DE3} v^{N}w'= \dfrac{R(v,w)}{S(v,w)}
\end{equation}
with $R(v,w)= w(v^Ns(v)+v^{N-1}A_{1}+\cdots+A_{N})$ and
$S(v,w)\equiv{1}$ verifying: $a)$ $R(v,w)$ is a polynomial in
$w$ whose coefficients are holomorphic around $v=0$,
$b)$ $R(0,w)$ and $S(0,w)$ are not identically zero, and $c)$
$R(v,w)$ and $S(v,w)$ have not common roots when $v=0$. From
\cite[Th\'eor\`eme 1,\,p.\,99]{Huku} then $\Gamma(v)$
verifies the \em Picard's Property: \em $\Gamma(v)$ takes in any
punctured disk centered at $v=0$ all the values in $\mathbb{C}$
except the \em zero, \em which corresponds with the unique \em
principle characteristic value \em of (\ref{DE3})
\cite[p.\,34]{Huku} given by the solutions of $R(0,w)=0$. Therefore each level
of (\ref{primerauv}), and then each component of $H^{-1}(C_z)$,
accumulates $v=0$. It implies that $C_z$ accumulates
$x^{\ell}y +p(x)=0$  by the equations of $H$ (\ref{relaciones}) what is impossible due to
properness of $C_z$. Hence $N=1$.

Let us
show that $\lambda_{1}\in\mathbb{Q}$.
From (\ref{desarrollofraccion}) as $\omega$  has a pole of order
one at $v=0$ we can assume that it is $\lambda_{1} / z \,dz$ after
a biholomorphism in a neighborhood of $v=0$ fixing it
\cite{Martinet-Ramis}. This way we may suppose that $F(u,v)= u /
v^{\lambda_1}$.

\noindent $\bullet$ If
$\lambda_{1}\in\mathbb{R}\setminus\mathbb{Q}$ each component of
$H^{-1}(C_z)$ is contained in a real subvariety of dimension
three  \cite[p.\,120]{Loray}. Hence $C_z$ is not proper projecting by $H$.

\noindent $\bullet$ If
$\lambda_{1}\in\mathbb{C}\setminus\mathbb{R}$ each component of
$H^{-1}(C_z)$ must accumulate  $\{u=0\}$ and $\{v=0\}$ \cite[p.\,120]{Loray}. In particular $C_z$
accumulates $x^{\ell}y +p(x)=0$ by the equations of $H$ (\ref{relaciones}) what
again gives us a contradiction with properness of $C_z$.

Finally, $zs(z)=a(z)-a(0)$ implies $\bar{s}\in\mathbb{C}[z^n]$ since
$a\in\mathbb{C}[z^{n}]$ by Lemma 2.
\end{proof}
As a consequence of the above lemmas taking $\lambda_{1}=p/q$ we obtain that
$$
G^{nq}=\dfrac{x^{q}}{e^{\,\,nq\,\bar{s}(x^
{m}(x^{\ell}y+p(x))^{n})}{[x^ {m}(x^{\ell}y+p(x))^{n}]}^{p}}
$$
with $x^ {m}(x^{\ell}y+p(x))^{n}$ as in (\ref{seq}) is a
meromorphic first integral of type $\mathbb{C}^{\ast}$ for $X$ \em
up to a polynomial automorphism. \em Therefore all the
trajectories of $X$ are proper, and except at most the one contained in $x=0$ all of them
are of type $\mathbb{C}^{\ast}$.

\begin{remark} \em According to \S 3.2 any polynomial vector field $X$ with a transcendental trajectory of type $\mathbb{C}^{\ast}$ defining a foliation  $P-\,$complete with $P$ of type $\mathbb{C}^{\ast}$ must be proportional to a complete vector field. It is enough to take in (\ref{Y}) $k=0$ to obtain complete vector fields in the cases (i.2) and (i.3) of \cite[Theorem 1.1]{Bustinduy-indiana}.
\end{remark}

\section*{Acknowledgments}
I want to thank the referee for his suggestions that have improved
this paper. In particular, he pointed out to me Remark~2.

\newpage

\bibliographystyle{plain}
\bibliographystyle{amsalpha}
%\bibliography{albib}

\def\cprime{$'$} \def\cprime{$'$}

\end{document}